\begin{document}

\markboth{Yiwang Chen, Nicholas Dunn, Campbell Hewett, Shashwat Silas}
{On the Inversion Polynomial for Dedekind Sums}

%
\catchline{}{}{}{}{}
%

\title{ON THE INVERSION POLYNOMIAL FOR DEDEKIND SUMS}

\author{Yiwang Chen}

\address{University of Illinois at Urbana-Champaign\\
\email{ychen137@illinois.edu} }

\author{Nicholas Dunn}

\address{North Carolina State University at Raleigh\\
\email{njdunn2@ncsu.edu} }

\author{Campbell Hewett}

\address{Brown University\\
\email{campbell\_hewett@brown.edu} }

\author{Shashwat Silas}

\address{Brown University\\
\email{shashwat\_silas@brown.edu} }

\maketitle

\begin{abstract}
We introduce the inversion polynomial for Dedekind sums $f_b(x)=\sum x^{\inv(a,b)}$ to study the number of $s(a,b)$ which have the same value for given $b$. We prove several properties of this polynomial and present some conjectures. We also introduce connections between Kloosterman sums and the inversion polynomial evaluated at particular roots of unity. Finally, we improve on previously known bounds for the second highest value of the Dedekind sum and provide a conjecture for a possible generalization. Lastly, we include a new restriction on equal Dedekind sums based on the reciprocity formula.
\end{abstract}

\section{Introduction}	

\subsection{The Dedekind Sum}

Let $a$ and $b$ be relatively prime integers. The \textit{Dedekind sum} $s(a,b)$ is given by
\begin{equation}
s(a,b)=\sum_{i=1}^{b-1}\frac{i}{b}\llrr{\frac{ai}{b}},
\end{equation}
where 
\begin{equation}
\llrr{x}=\begin{cases}0 & x \in \Z \\ x-\lfloor x \rfloor-\frac{1}{2} & x \not\in \Z\end{cases}
\end{equation}
is the sawtooth function. Our motivating problem is to classify all $a_1$ and $a_2$ such that $s(a_1,b)=s(a_2,b)$ for a fixed $b$. Partial results are given by the following theorem in [1].

\textbf{Theorem.}
$s(a_1,b)-s(a_2,b) \in \Z$ if and only if $b|(a_1-a_2)(a_1 a_2-1)$.

The problem is solved in the case where $b$ is a prime power in [2].

\subsection{The Inversion Number}

Given a permutation $\sigma$ of the numbers $\{1,2,\dots,b\}$, define the \textit{inversion number} of $\sigma$ as 
\begin{equation}
\inv(\sigma)=\#\{(i,j)~|~1 \le i<j \le b,\sigma(i)>\sigma(j)\}.
\end{equation}
If $a$ is relatively prime to $b$, then the map $\sigma_a:\{1,2,\dots,b\} \to \{1,2,\dots,b\}$ given by $x \mapsto ax \pmod{b}$ is a permutation of $\{1,2,\dots,b\}$. In this case define $\inv(a,b)=\inv(\sigma_a)$. The following is a fact due to Zolotarev.

\textbf{Theorem.}
For $(a,b)=1$, 
\begin{equation}\label{invtos}
\inv(a,b)=-3bs(a,b)+\frac{(b-1)(b-2)}{4}.
\end{equation}

From this, one sees that for a given $b$, $s(a_1,b)=s(a_2,b)$ if and only if $\inv(a_1,b)=\inv(a_2,b)$. 

\subsection{The Inversion Polynomial}
An advantage of the inversion number is that it is always a nonnegative integer. Therefore, for a positive integer $b$, define the \textit{inversion polynomial}

\begin{equation}
f_b(x)=\sum_{\substack{1 \le a \le b\\(a,b)=1}} x^{\inv(a,b)}.
\end{equation}

We focus most of our attention on exploring this polynomial because it suggests how often Dedekind sums take on certain values. Indeed, if $c x^d$ is a term in $f_b(x)$, then exactly $c$ different values of $a$ have $\inv(a,b)=d$. There are existing bounds on the number of such $a$ that can take on the same Dedekind sum; as mentioned in [2], if $b$ is square-free, it can be shown that for a given $d$, the number of $a$ such that $s(a,b)=d$ cannot exceed $2^r$, where $r$ is the number of prime factors of $b$. A complete understanding of the inversion polynomial will give us the number of equal Dedekind sums. In our paper, we have been able to characterize many polynomial factors of $f_b$. For certain $x$, we also show that $f_b(x)$ can be written in terms of Kloosterman sums. We also present novel bounds on the smallest values of $\inv(a,b)$ for a given $b$.

\subsection{Elementary Properties of the Inversion Polynomial}

A list of facts about the inversion polynomial is as follows.

(i) The degree of $f_b$ is $\frac{(b-1)(b-2)}{2}$. The largest value $\inv(a,b)$ can take is 
\begin{equation}
\inv(-1,b)=\frac{(b-1)(b-2)}{2}
\end{equation}
because for every pair $1 \le i < j \le b-1$, $\sigma_{-1}(i)>\sigma_{-1}(j)$.

(ii) The constant term and the leading coefficient are both 1. If $\inv(a,b)=0$, then we must have $a=1$, and if $\inv(a,b)=\frac{(b-1)(b-2)}{2}$, then we must have $a=-1$. Translating this result into Dedekind sums using (\ref{invtos}), we obtain a simple proof of the known fact
\begin{equation}
-\frac{(b-1)(b-2)}{12b} \le \inv(a,b) \le \frac{(b-1)(b-2)}{12b}.
\end{equation}

(iii) The coefficients are symmetric. This follows from
\begin{equation}
\inv(a,b)+\inv(-a,b)=\frac{(b-1)(b-2)}{2},
\end{equation}
which is true because for $1 \le i<j \le b-1$, exactly one of $\sigma_a(i)>\sigma_a(j)$ and $\sigma_{-a}(i)>\sigma_{-a}(j)$ holds.

(iv) For $b$ not divisible by three, there is a polynomial $g$ such that $f_b(x)=g(x^3)$. In other words, if $b$ is not divisible by three, $\inv(a,b)$ is divisible by three. To see this, rewrite (\ref{invtos}) as
\begin{equation}
2\inv(a,b)=3\left(-2bs(a,b)+\frac{(b-1)(b-2)}{6}\right).
\end{equation}
When three does not divide $b$, the quantity in the parentheses is an integer, as shown in [4]. Three does not divide two, so three divides $\inv(a,b)$.

\section{Overview of Results and Conjectures}

\subsection{Factors of the Inversion Polynomial}

From decomposing the inversion polynomial into its irreducible factors for many $b$, it appears that $f_b(x)$ is the product of cyclotomic polynomials as well as exactly one non-cyclotomic irreducible factor. The following is a conjecture (several parts of which are proved) which characterizes most of the cyclotomic factors. Throughout this paper, we use $\zeta_m$ to denote a primitive $m$th root of unity.

\begin{conjecture}
For $b=ck$, $\zeta_{2k}$ and $\zeta_{6k}$ are roots of $f_b(x)$ precisely under the following conditions:

(i) If $c=2 \pmod{4}$, then $\zeta_{2k}$ and $\zeta_{6k}$ are roots of $f_b(x)$ precisely when $k=4 \pmod{8}$.

(ii) If $c=0 \pmod{4}$, then $\zeta_{2k}$ and $\zeta_{6k}$ are roots of $f_b(x)$ precisely when $k \neq 2 \pmod{4}$ and $8 \nmid k$.

(iii) If $c=3^m n$, $m \ge 1$, and $(n,6)=1$, then only $\zeta_{2k}$ is a root of $f_b(x)$ precisely when $3n \nmid k$ and $c$ is not a square.

(iv) If $(c,6)=1$, then $\zeta_{2k}$ and $\zeta_{6k}$ are roots of $f_b(x)$ precisely when $c \nmid k$ and $c$ is not a square.
\end{conjecture}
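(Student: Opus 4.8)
The plan is to show that $\zeta_{2k}$ (respectively $\zeta_{6k}$) is a root of $f_b$ by evaluating $f_b$ at these roots of unity and relating the result to a sum over the arithmetic data of $b=ck$. The key tool is the generating-function identity for inversion numbers under a primitive root of unity: when we substitute $x=\zeta_N$, the power $x^{\inv(a,b)}$ only depends on $\inv(a,b) \bmod N$, so $f_b(\zeta_N)$ collapses to a weighted count of residue classes of $\inv(a,b)$. Because of the Zolotarev identity (\ref{invtos}), which writes $\inv(a,b)$ as an explicit affine function of $b s(a,b)$, I would first translate each target $\zeta_{2k}$ or $\zeta_{6k}$ into a congruence condition on $\inv(a,b)$ modulo $2k$ or $6k$, and hence (via (\ref{invtos})) into a condition on $s(a,b)$ or equivalently on the permutation $\sigma_a$. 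The symmetry property (iii) from the elementary list will be essential here: since $\inv(a,b)+\inv(-a,b)=\tfrac{(b-1)(b-2)}{2}$, the contributions of $a$ and $-a$ pair up, and whether this pairing forces $f_b(\zeta_N)=0$ depends on the parity and residue of $\tfrac{(b-1)(b-2)}{2}$ modulo $N$. This is exactly where the case distinctions on $c \bmod 4$, $k \bmod 8$, and divisibility by $3$ will enter.

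First I would set up, for each of the four cases, the precise substitution $x=\zeta_{2k}$ and $x=\zeta_{6k}$, and reduce the exponent $\inv(a,b)$ modulo the relevant order. The cleanest route is to group the terms of $f_b$ according to the value of $a \bmod$ (a suitable divisor), using the Chinese Remainder Theorem on $b=ck$ to separate the $c$-part from the $k$-part of the residue $a$. I expect that $\inv(a,b) \bmod 2k$ decomposes multiplicatively in a way governed by a reciprocity-type splitting of the Dedekind sum, so that $f_b(\zeta_{2k})$ factors as a product (or sum) indexed by the divisors $c$ and $k$. The vanishing then reduces to showing one factor is zero, which happens precisely when a certain Gauss-sum-like or geometric-series sum over a residue class vanishes — and a geometric series $\sum_{j} \zeta^{j}$ vanishes exactly when the common ratio is a nontrivial root of unity, i.e. under an explicit divisibility condition. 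Matching that divisibility condition against the stated hypotheses ($k \equiv 4 \pmod 8$ in (i), $3n \nmid k$ in (iii), etc.) is the bookkeeping that produces the four cases.

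For the ``$c$ is not a square'' conditions in (iii) and (iv), and the distinction between $\zeta_{2k}$ alone versus both $\zeta_{2k}$ and $\zeta_{6k}$, I anticipate needing a finer analysis: the square condition on $c$ should correspond to whether a self-paired term (a fixed point of the $a \mapsto -a$ or $a \mapsto a^{-1}$ involution, or a term where the geometric-series ratio is $\pm 1$) survives the cancellation. Concretely, a single unpaired middle term with exponent $\equiv \tfrac{(b-1)(b-2)}{4} \pmod N$ prevents $f_b(\zeta_N)$ from vanishing, and the existence of such an unpaired term is controlled by whether $c$ is a perfect square (since $a^2 \equiv 1$ solutions, governing the involution, are tied to the square structure). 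I would handle $\zeta_{6k}$ by using property (iv) of the elementary list: when $3 \nmid b$ the inversion number is divisible by $3$, which is why $\zeta_{6k}$ behaves like $\zeta_{2k}$ in cases (i), (ii), (iv), but case (iii) ($3 \mid c$) breaks this divisibility and explains why only $\zeta_{2k}$ can be a root there.

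The main obstacle, I expect, will be establishing the multiplicative splitting of $\inv(a,b) \bmod 2k$ cleanly enough to isolate a single geometric-series factor whose vanishing is equivalent to the stated divisibility hypothesis. The reciprocity formula for Dedekind sums does not split $s(a,b)$ as a clean product, so transferring it to a congruence on $\inv(a,b)$ modulo $2k$ or $6k$ will require care, and the subtractive terms in reciprocity are precisely what generate the residue shifts responsible for the asymmetry between the four cases. I would verify the proposed splitting on small cases (e.g. $b=2k$ with $k$ odd versus $k \equiv 4 \pmod 8$) before committing to the general argument, and I would treat the ``not a square'' refinements last, since they are the most delicate part of detecting a surviving unpaired term.
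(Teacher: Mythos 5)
This statement is Conjecture 2.1 — the paper itself does not prove it. What the paper proves are fragments: Proposition 2.4 (sufficiency of ``$c$ not a square'' for odd $b$ with $(c,k)=1$), Proposition 2.2 (the case $k=1$, via Zolotarev/Jacobi symbols), and Proposition 2.5, which shows that in cases (i) and (ii) the quantity $f_b(\zeta_{2k})$ \emph{equals} an explicit Kloosterman sum, so those cases are equivalent to (unproven) vanishing statements for Kloosterman sums. Measured against this, your plan has two genuine gaps. First, your central cancellation mechanism is wrong. Pairing $a$ with $-a$ gives terms $\zeta^{\inv(a,b)}+\zeta^{M-\inv(a,b)}$ with $M=\tfrac{(b-1)(b-2)}{2}$, and these cancel termwise only at $x=-1$ with $M$ odd (the $4\mid b$ case of Proposition 2.2); at a general $\zeta_{2k}$ no such pairwise cancellation occurs. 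In the cases that can actually be proved, the cancellation is global, not pairwise: one reduces (\ref{recip2}) modulo $k$ to get $4a\inv(a,b)\equiv -(a-1)^2 \pmod{k}$, glues this to Zolotarev's parity formula by CRT to pin down $\inv(a,b) \bmod 2k$, writes $a=qk+r$, and finds that the inner sum is the quadratic character sum $\sum_q \left(\frac{qk+r}{c}\right)$, which vanishes because the Jacobi symbol $\left(\frac{\cdot}{c}\right)$ is a non-principal character exactly when $c$ is not a square. That is where ``$c$ not a square'' enters — it has nothing to do with fixed points of the involution $a\mapsto -a$ or solutions of $a^2\equiv 1$, and it is a character-sum phenomenon, not a geometric series.

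Second, your plan does not recognize the jump in difficulty for $c$ even. There the same reduction does not produce a Jacobi character sum but a Kloosterman sum, e.g. $f_b(\zeta_{2k})=\tfrac12 e^{2\pi i/4k}K\left(\tfrac{b}{4k},\tfrac{b}{4k},2b\right)$ when $4\mid c$, and no elementary cancellation (pairing, geometric series, or character orthogonality) is known to decide when a Kloosterman sum vanishes; this is precisely why parts (i) and (ii) remain conjectural in the paper. Finally, the ``precisely when'' direction — proving $f_b(\zeta_{2k})\neq 0$ when the stated conditions fail — is not addressed by your plan at all, and Table 1 of the paper (roots $\zeta_m$ with $m\nmid 2b$, $m\nmid 6b$) shows the full classification has exceptional behavior that any if-and-only-if argument would have to confront. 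In short: the portion of your plan that overlaps with Proposition 2.4 is salvageable once the pairing heuristic is replaced by the reciprocity-plus-character-sum computation, but the even cases and the converse direction are open problems, not bookkeeping.
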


For some $b$, there do exist other roots $\zeta_m$, where $m \nmid 2b$ and $m \nmid 6b$. Table 1 below shows the first several roots $\zeta_m$ not accounted for in Conjecture 2.1.

\begin{table}[ht]
\tbl{All roots $\zeta_m$ not accounted for in Conjecture 2.1 up to $b=424$.}
{\begin{tabular}{c|c}
$b$ & Unexplained roots \\\hline
8 & $\zeta_{18}$\\
18 & $\zeta_{16}$\\
22 & $\zeta_{20}, \zeta_{60}$\\
26 & $\zeta_{20}, \zeta_{60}$\\
29 & $\zeta_{18}$\\
40 & $\zeta_{18}, \zeta_{90}$\\
45 & $\zeta_{8}, \zeta_{40}$\\
46 & $\zeta_{36}$\\
56 & $\zeta_{18}$\\
57 & $\zeta_{54}$\\
70 & $\zeta_{18}, \zeta_{90}$\\
74 & $\zeta_{36}$\\
80 & $\zeta_{14}, \zeta_{42}$\\
83 & $\zeta_{18}$\\
114 & $\zeta_{108}$\\
117 & $\zeta_{8}$
\end{tabular}
\begin{tabular}{c|c}
$b$ & Unexplained roots \\\hline
136 & $\zeta_{18}, \zeta_{306}$\\
138 & $\zeta_{108}$\\
148 & $\zeta_{18}, \zeta_{36}$\\
173 & $\zeta_{18}$\\
186 & $\zeta_{20}$\\
198 & $\zeta_{20}$\\
200 & $\zeta_{18}$\\
204 & $\zeta_{54}$\\
296 & $\zeta_{18}$\\
317 & $\zeta_{18}$\\
325 & $\zeta_{18}, \zeta_{90}$\\
332 & $\zeta_{72}$\\
345 & $\zeta_{54}$\\
362 & $\zeta_{36}$\\
398 & $\zeta_{18}$\\
424 & $\zeta_{18}$
\end{tabular}}
\end{table}

Below we list several particular cases of Conjecture 2.1 whose proofs are known.

\begin{proposition}
We can completely characterize the integers $b$ for which $x+1$ divides $f_b(x)$:
\begin{equation}
f_b(-1)=\begin{cases}0 & \text{ if $b$ is an odd non-square or $4|b$}\\ \varphi(b) & \text{ if $b$ is an odd square or $b=2 \pmod{4}$}\end{cases}.
\end{equation}
When $b$ is an odd square or $b=2 \pmod{4}$, $\inv(a,b)$ takes only even values.
\end{proposition}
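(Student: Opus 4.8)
The plan is to recognize $f_b(-1)$ as a character sum. Since the number of inversions of a permutation determines its parity via $\mathrm{sgn}(\sigma)=(-1)^{\inv(\sigma)}$, the summand $(-1)^{\inv(a,b)}$ is exactly $\mathrm{sgn}(\sigma_a)$, the sign of the multiplication-by-$a$ permutation of $\Z/b\Z$ (the point $b\equiv 0$ is fixed, so this agrees with the permutation of $\{1,\dots,b\}$ used to define $\inv$). Because $\sigma_{aa'}=\sigma_a\circ\sigma_{a'}$ and the sign is multiplicative, the map $\varepsilon\colon a\mapsto \mathrm{sgn}(\sigma_a)$ is a homomorphism $(\Z/b\Z)^{\times}\to\{\pm1\}$, so
\[
f_b(-1)=\sum_{\substack{1\le a\le b\\(a,b)=1}}\varepsilon(a)
\]
is the sum of a quadratic (or trivial) character over the full group. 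By orthogonality it equals $\varphi(b)$ when $\varepsilon$ is trivial and $0$ otherwise, and the whole proposition reduces to deciding, for each residue class of $b$, whether $\varepsilon$ is the trivial character.

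Next I would compute $\varepsilon$ through the prime factorization. Writing $b=b_1b_2$ with $(b_1,b_2)=1$ and identifying $\Z/b\Z$ with $\Z/b_1\Z\times\Z/b_2\Z$ by the Chinese Remainder Theorem, the permutation $\sigma_a$ becomes a product permutation, and for permutations of finite sets of sizes $n_1,n_2$ one has $\mathrm{sgn}(\pi_1\times\pi_2)=\mathrm{sgn}(\pi_1)^{n_2}\mathrm{sgn}(\pi_2)^{n_1}$. Applying this with $b=2^s m$, $m$ odd, gives $\varepsilon(a)=\varepsilon_{2^s}(a)^{m}\,\varepsilon_{m}(a)^{2^s}$, where $\varepsilon_{2^s}$ and $\varepsilon_m$ are the corresponding signs modulo $2^s$ and $m$. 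For the odd part I would invoke Zolotarev's lemma in its Jacobi-symbol form, $\varepsilon_m(a)=\left(\frac{a}{m}\right)$, together with the standard fact that $\left(\frac{\cdot}{m}\right)=\prod_p\left(\frac{\cdot}{p}\right)^{e_p}$ is the principal character precisely when every exponent $e_p$ is even, i.e.\ when $m$ is a perfect square. For odd $b$ (so $s=0$, $m=b$) this already settles matters: $\varepsilon$ is trivial exactly when $b$ is an odd square, giving $f_b(-1)=\varphi(b)$ there and $f_b(-1)=0$ for odd non-squares.

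For even $b$ the formula simplifies: since $m$ is odd, $\varepsilon_{2^s}(a)^m=\varepsilon_{2^s}(a)$, and since $s\ge1$ makes $2^s$ even, $\varepsilon_m(a)^{2^s}=1$, so $\varepsilon=\varepsilon_{2^s}$ and the odd part drops out entirely. It remains to evaluate $\varepsilon_{2^s}$, and the cleanest route is to test it on $-1$: multiplication by $-1$ on $\Z/2^s\Z$ fixes exactly $0$ and $2^{s-1}$ and splits the remaining $2^s-2$ residues into $2^{s-1}-1$ transpositions, so $\varepsilon_{2^s}(-1)=(-1)^{2^{s-1}-1}$, which is $+1$ when $s=1$ and $-1$ when $s\ge2$. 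Thus for $b\equiv2\pmod4$ (where $s=1$ forces $\varepsilon_{2}\equiv1$) the character is trivial and $f_b(-1)=\varphi(b)$, while for $4\mid b$ (where $s\ge2$) it is nontrivial and $f_b(-1)=0$. Assembling the four cases reproduces the stated formula, and the final remark is immediate: when $f_b(-1)=\varphi(b)$, a sum of $\varphi(b)$ terms each equal to $\pm1$ can reach $\varphi(b)$ only if every term is $+1$, i.e.\ every $\inv(a,b)$ is even.

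The main obstacle I anticipate is the bookkeeping around the $2$-adic part: getting the CRT exponents right so the odd factor genuinely vanishes for even $b$, and correctly counting the cycle structure of $x\mapsto-x$ on $\Z/2^s\Z$ to pin down the parity $(-1)^{2^{s-1}-1}$. As a consistency check (and an independent proof of two cases), property (iii) gives $(-1)^{\inv(-a,b)}=(-1)^{(b-1)(b-2)/2}(-1)^{\inv(a,b)}$, and $(b-1)(b-2)/2$ is odd exactly for $b\equiv0,3\pmod4$; in those classes the involution $a\mapsto-a$ pairs the units into canceling pairs, directly yielding $f_b(-1)=0$ and matching the $4\mid b$ and $b\equiv3$ subcases. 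The classes $b\equiv1\pmod4$ (which must distinguish odd squares from odd non-squares) and $b\equiv2\pmod4$ are precisely the ones this symmetry cannot resolve, which is why the Zolotarev/Jacobi input and the $\varepsilon_{2}$ computation are essential.
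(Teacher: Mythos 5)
Your proof is correct, and on the odd case it coincides with the paper's: both invoke Zolotarev's lemma to identify $(-1)^{\inv(a,b)}$ with the Jacobi symbol $\left(\frac{a}{b}\right)$ and then use orthogonality, with the character principal exactly when $b$ is an odd square. Where you genuinely diverge is the even case. The paper handles $4\mid b$ via the symmetry $\inv(a,b)+\inv(-a,b)=\frac{(b-1)(b-2)}{2}$, which is odd there, so the terms of $f_b(-1)$ cancel in pairs (this is precisely the pairing you mention only as a consistency check); and it handles $b\equiv 2\pmod{4}$ by reducing the reciprocity formula (\ref{recip2}) modulo $2$ to get $\inv(a,b)\equiv\frac{a-1}{2}\left(\frac{a-1}{2}+1\right)\equiv 0 \pmod{2}$, a product of consecutive integers, whence $f_b(-1)=\varphi(b)$. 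You instead treat $f_b(-1)$ uniformly as the sum over $(\Z/b\Z)^{\times}$ of the character $\varepsilon(a)=\mathrm{sgn}(\sigma_a)$, split $\varepsilon$ by the Chinese remainder theorem using $\mathrm{sgn}(\pi_1\times\pi_2)=\mathrm{sgn}(\pi_1)^{n_2}\mathrm{sgn}(\pi_2)^{n_1}$, note that for $b=2^s m$ with $s\ge 1$ the odd part drops out, and pin down $\varepsilon_{2^s}$ by counting the cycles of negation on $\Z/2^s\Z$, giving $\varepsilon_{2^s}(-1)=(-1)^{2^{s-1}-1}$; all steps check out, including the surjectivity of reduction mod $2^s$ needed to conclude $\varepsilon$ is nontrivial when $\varepsilon_{2^s}$ is. Your route is in effect the extension of Zolotarev's lemma to even moduli: it avoids the reciprocity formula entirely, makes the dichotomy $0$ versus $\varphi(b)$ a single statement about whether one character is principal, and delivers the final evenness claim for free in both $\varphi(b)$ cases at once. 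What the paper's route buys in exchange is an explicit mod-$2$ congruence for $\inv(a,b)$ when $b\equiv 2\pmod{4}$, extracted from the reciprocity machinery it reuses throughout the rest of the paper (notably in Propositions 2.4--2.7), so its proof is better integrated with the paper's later arguments even though it is less unified as a standalone argument.
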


The next result is not a special case of Conjecture 2.1 but is a classification of more particular roots.

\begin{proposition}
For $b$ any nonsquare integer $1 \pmod{4}$, $-1$ is a double root of $f_b(x)$. If in addition three does not divide $b$, then $\zeta_6$ is a double root of $f_b(x)$. That is, $(x^3+1)^2$ divides $f_b(x)$.
\end{proposition}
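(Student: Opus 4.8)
The plan is to argue directly from the definition $f_b(x)=\sum_{(a,b)=1}x^{\inv(a,b)}$ together with its derivative, using only the symmetry relation of property (iii) and the evaluation $f_b(-1)$ supplied by the preceding proposition. Write $N=\frac{(b-1)(b-2)}{2}$ for the degree of $f_b$. The first thing I would record is a parity observation: when $b\equiv 1\pmod 4$ we have $4\mid(b-1)$ while $b-2$ is odd, so $N=\frac{b-1}{2}(b-2)$ is \emph{even}. This is the single point at which the hypothesis $b\equiv 1\pmod 4$ is used (rather than merely ``odd non-square''), and it is exactly the input the argument requires.

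To show $-1$ is a double root it suffices to check $f_b(-1)=0$ and $f_b'(-1)=0$. The first is immediate from the previous proposition, since $b$ is an odd non-square. For the second I would differentiate and then substitute $a\mapsto-a$, which permutes the residues coprime to $b$. By property (iii) we have $\inv(-a,b)=N-\inv(a,b)$, and since $N$ is even, $(-1)^{\inv(-a,b)}=(-1)^{\inv(a,b)}$. Setting $S=\sum_{(a,b)=1}\inv(a,b)(-1)^{\inv(a,b)}$, the substitution yields
\begin{equation}
S=\sum_{(a,b)=1}\bigl(N-\inv(a,b)\bigr)(-1)^{\inv(a,b)}=N f_b(-1)-S,
\end{equation}
so that $2S=N f_b(-1)=0$. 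Hence $f_b'(-1)=-S=0$, and $-1$ is a double root.

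For the statement about $\zeta_6$, I would invoke property (iv): when $3\nmid b$ we may write $f_b(x)=g(x^3)$ for a polynomial $g$. Since $\zeta_6^3=-1$, evaluating gives $f_b(\zeta_6)=g(-1)=f_b(-1)=0$, and from $f_b'(x)=3x^2 g'(x^3)$ we get $f_b'(-1)=3g'(-1)$ and $f_b'(\zeta_6)=3\zeta_6^2 g'(-1)$. The already-established $f_b'(-1)=0$ forces $g'(-1)=0$, whence $f_b'(\zeta_6)=0$ and $\zeta_6$ is a double root. Most efficiently, $g(-1)=g'(-1)=0$ means $(x+1)^2\mid g(x)$, say $g(x)=(x+1)^2 h(x)$; then $f_b(x)=g(x^3)=(x^3+1)^2 h(x^3)$, which gives the divisibility $(x^3+1)^2\mid f_b(x)$ in one stroke (and absorbs the conjugate root $\zeta_6^{-1}$ automatically).

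The computations here are short, so I do not anticipate a serious obstacle. The one point to handle with care is the interplay between the $a\mapsto-a$ substitution, the parity of $N$, and property (iii): it is precisely the evenness of $N$ that turns the substituted sum into $N f_b(-1)-S$ (forcing $S=0$) rather than into a tautology. Indeed, for $b\equiv 3\pmod 4$ the number $N$ is odd and the same manipulation collapses to $0=0$, consistent with $-1$ not being a double root in that case; so I would make sure to highlight that the parity of $N$ is doing the real work.
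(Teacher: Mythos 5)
Your proof is correct, and while it shares the paper's basic skeleton---symmetrizing the derivative sum under $a \mapsto -a$ and reducing $f_b'(-1)$ to a multiple of $N f_b(-1)$---it packages the argument in a genuinely more elementary way. The paper runs everything through Zolotarev's theorem $(-1)^{\inv(a,b)}=\left(\frac{a}{b}\right)$: the hypothesis $b\equiv 1\pmod 4$ enters as $\left(\frac{-1}{b}\right)=1$, nonsquareness enters as the vanishing of the character sum $\sum_{(a,b)=1}\left(\frac{a}{b}\right)=0$, and the identity $\inv(a,b)\left(\frac{a}{b}\right)+\inv(-a,b)\left(\frac{-a}{b}\right)=N\left(\frac{a}{b}\right)$ plays exactly the role of your $S=Nf_b(-1)-S$. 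You instead keep $(-1)^{\inv(a,b)}$ intact, use the evenness of $N$ (which is equivalent to the paper's $\left(\frac{-1}{b}\right)=1$, since Zolotarev at $a=-1$ gives $\left(\frac{-1}{b}\right)=(-1)^N$), and quote Proposition 2.2 for $f_b(-1)=0$, so the number-theoretic input is invoked once, inside that black box, rather than again in this proof. What your route buys: it isolates precisely where $b\equiv 1\pmod 4$ is needed, and your closing remark about $b\equiv 3\pmod 4$ correctly explains why the mechanism degenerates there. Moreover, your factorization $g(x)=(x+1)^2h(x)$, hence $f_b(x)=(x^3+1)^2h(x^3)$, delivers the stated divisibility by $(x^3+1)^2$ in one stroke, conjugate root $\zeta_6^{-1}$ included, whereas the paper only verifies $-1$ and $\zeta_6$ and leaves the conjugate to the reality of the coefficients. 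What the paper's route buys in exchange is that the Jacobi-symbol formulation makes the character-sum structure explicit, which is the form of the argument that generalizes to the $\zeta_{2k}$ computations in Propositions 2.4 and 2.5.
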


When $b$ is odd, the situation becomes more manageable. Our next result is explained by Conjecture 2.1 and includes one direction of Proposition 2.2 as a special case.

\begin{proposition}
If $b=ck$, where $b$ is odd, $c$ is not square, and $(c,k)=1$, then $\zeta_{2k}$ is a root $f_b(x)$. If $b$ is not divisible by three, then $\zeta_{6k}$ is also a root of $f_b(x)$.
\end{proposition}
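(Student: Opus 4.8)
The plan is to evaluate $f_b(\zeta_{2k})=\sum_{(a,b)=1}\zeta_{2k}^{\inv(a,b)}$ directly and show the sum collapses by a character argument. The guiding model is the case $k=1$, where $\zeta_{2k}=-1$: since $(-1)^{\inv(a,b)}$ is the sign of the permutation $\sigma_a$, Zolotarev's lemma gives $(-1)^{\inv(a,b)}=\left(\frac{a}{b}\right)$, so $f_b(-1)=\sum_{(a,b)=1}\left(\frac{a}{b}\right)$, which vanishes precisely when the Jacobi symbol is a nontrivial character, i.e.\ when $b=c$ is not a perfect square. For general $k$ I would exploit the factorization $b=ck$ with $(c,k)=1$ to reduce $f_b(\zeta_{2k})$ to this same nontrivial Jacobi-symbol sum, now taken over $(\Z/c\Z)^{\ast}$.

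The crux is a congruence isolating the dependence of $\inv(a,ck)$ on $a\bmod c$. Writing $\overline{a}=a\bmod c$, the goal is to prove
\begin{equation}
\inv(a,ck)\equiv k\,\inv(\overline{a},c)+h(a\bmod k)\pmod{2k},
\end{equation}
where $h$ depends only on $a\bmod k$ (and on the fixed data $c,k$). Granting this, parametrize $a\leftrightarrow(\alpha,\beta)\in(\Z/c\Z)^{\ast}\times(\Z/k\Z)^{\ast}$ by the Chinese Remainder Theorem. Since $\zeta_{2k}^{k}=-1$, Zolotarev's lemma applied to the odd modulus $c$ gives $\zeta_{2k}^{k\inv(\alpha,c)}=(-1)^{\inv(\alpha,c)}=\left(\frac{\alpha}{c}\right)$, and therefore
\begin{equation}
f_b(\zeta_{2k})=\left(\sum_{\alpha\in(\Z/c\Z)^{\ast}}\left(\frac{\alpha}{c}\right)\right)\left(\sum_{\beta\in(\Z/k\Z)^{\ast}}\zeta_{2k}^{h(\beta)}\right).
\end{equation}
Because $c$ is not a perfect square, $\left(\frac{\cdot}{c}\right)$ is a nontrivial character of $(\Z/c\Z)^{\ast}$, so the first factor is $0$ and $f_b(\zeta_{2k})=0$. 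Note that the precise shape of $h$ is irrelevant; only the coefficient $k$ in front of $\inv(\overline{a},c)$ matters, and this is exactly where the hypothesis that $c$ is a nonsquare is consumed.

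To establish the congruence I would expand $\inv$ combinatorially using base-$c$ place-value coordinates: write each $x\in\{0,\dots,ck-1\}$ as $x=qc+s$ with $q\in\{0,\dots,k-1\}$ and $s\in\{0,\dots,c-1\}$, so that the natural order on $x$ becomes the lexicographic order on $(q,s)$ with $q$ the major coordinate. A short computation shows that $\sigma_a$ acts as a skew product: the base coordinate transforms by the mod-$c$ multiplication permutation $s\mapsto\overline{a}\,s\bmod c$, while the fibre coordinate transforms by the affine map $q\mapsto(aq+\lfloor as/c\rfloor)\bmod k$, whose shift is a cocycle in $s$. Splitting the inversion count of this lexicographic permutation according to whether a pair of images lands in the same level set of $\lfloor\,\cdot\,/c\rfloor$ (same ``block'') or in different blocks, the same-block pairs contribute \emph{exactly} $k\,\inv(\overline{a},c)$, since each of the $k$ blocks contributes one copy of the mod-$c$ inversion count. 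The main obstacle is the different-block contribution $\#\{(q,s)<_{\mathrm{lex}}(p,t):q'>p'\}$: one must show that, taken modulo $2k$, it depends only on $a\bmod k$, i.e.\ that its residual dependence on $\overline{a}$ (entering through the floor terms $\lfloor as/c\rfloor$) cancels. I expect this to require a careful parity and bookkeeping argument, and it is precisely the step where $b$ odd and $(c,k)=1$ are essential.

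For the second statement, assume in addition $3\nmid b$, equivalently $3\nmid c$ and $3\nmid k$, and run the identical argument at $\zeta_{6k}$. Here $\zeta_{6k}^{3k}=-1$, so what is needed is the refinement
\begin{equation}
\inv(a,ck)\equiv 3k\,\inv(\overline{a},c)+\tilde h(a\bmod k)\pmod{6k},
\end{equation}
which reduces modulo $2k$ to the previous congruence (as $3k\equiv k\pmod{2k}$) and is compatible modulo $3$ with the divisibility $3\mid\inv(a,b)$ recorded in item (iv) of Section~1.4. The same place-value decomposition should yield this once that mod-$3$ divisibility is fed in, and then $\zeta_{6k}^{3k\inv(\alpha,c)}=\left(\frac{\alpha}{c}\right)$ produces the same vanishing Jacobi-symbol sum over $(\Z/c\Z)^{\ast}$, giving $f_b(\zeta_{6k})=0$.
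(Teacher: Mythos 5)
Your endgame is sound and, in fact, is essentially the paper's closing argument: parametrize $a\leftrightarrow(\alpha,\beta)\in(\Z/c\Z)^{\ast}\times(\Z/k\Z)^{\ast}$, use Zolotarev on the odd modulus $c$ to turn $\zeta_{2k}^{k\inv(\alpha,c)}$ into $\left(\frac{\alpha}{c}\right)$, and kill the sum $\sum_{\alpha}\left(\frac{\alpha}{c}\right)$ because $c$ is a nonsquare. Your central congruence $\inv(a,ck)\equiv k\,\inv(\overline{a},c)+h(a\bmod k)\pmod{2k}$ is also true. The gap is the proof you propose for it: the block decomposition does not split the way you claim, and the step you defer as ``careful parity and bookkeeping'' is not bookkeeping --- as you have set it up, it is false. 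Concretely, take $b=15$, $c=3$, $k=5$, $a=7$, so $\overline{a}=1$ and $\inv(\overline{a},c)=0$. Direct enumeration gives $\inv(7,15)=63$, and exactly $11$ of these inversions come from pairs whose images lie in a common block $\{3v,3v+1,3v+2\}$; so the same-block contribution is $11$, not $k\,\inv(\overline{a},c)=0$, and it is not even congruent to it modulo $2k=10$. Correspondingly, the cross-block count cannot be a function of $a\bmod k$ alone: for $a=2$ (which, like $7$, is $\equiv 2\pmod{5}$) the same-block count is $5$ and the cross-block count is $23$, whereas for $a=7$ the cross-block count is $52$, and $23\not\equiv 52\pmod{10}$. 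The structural reason is that the preimages of a fixed image block lie in the $c$ different columns but also in $c$ generally different domain blocks, so which preimage is lexicographically smaller is governed by the fibre shifts $\lfloor as/c\rfloor$ rather than by $s$; hence the $\overline{a}$-dependence and the $(a\bmod k)$-dependence do not separate pair by pair, only in the total.

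The correct route to the congruence --- and it is exactly what the paper does --- is to obtain the mod-$k$ information from reciprocity rather than combinatorics. Reducing (\ref{recip2}) modulo $k$ (legitimate since $k\mid b$) gives $4a\inv(a,b)\equiv-(a-1)^2\pmod{k}$, so $\inv(a,b)\bmod k$ is a function of $a\bmod k$ alone; Zolotarev gives $\inv(a,b)\equiv\frac{1}{2}\left(\left(\frac{a}{b}\right)-1\right)\pmod{2}$, whose $c$-part is your $(-1)^{\inv(\overline{a},c)}$; and the Chinese remainder theorem assembles these into your displayed congruence with the explicit choice
\[
h(\beta)\equiv-(4\beta)^{-1}(\beta-1)^2(k+1)+\frac{1}{2}\left(\left(\frac{\beta}{k}\right)-1\right)k\pmod{2k},
\]
the inverse taken modulo $k$. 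With this lemma in hand, your character-sum argument goes through, including the $\zeta_{6k}$ case via the divisibility $3\mid\inv(a,b)$ when $3\nmid b$, and it then coincides with the paper's proof, which writes $a=qk+r$, factors $\left(\frac{a}{ck}\right)=\left(\frac{qk+r}{c}\right)\left(\frac{r}{k}\right)$, checks that the exponential factor is independent of $q$, and uses $\sum_{q}\left(\frac{qk+r}{c}\right)=0$ for nonsquare $c$. So nothing in your outline needs to be discarded except the combinatorial proof of the key congruence, which must be replaced by the reciprocity argument.
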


For certain $b$, $f_b$ takes values in terms of Kloosterman sums when evaluated on roots of unity. Here we denote the Kloosterman sum
\begin{equation}
K(a,b,m)=\sum_{\substack{1 \le x \le m \\(x,m)=1}}e^{\frac{2\pi i}{m}(ax+bx^{-1})},
\end{equation}
where $x^{-1}$ is the inverse of $x$ modulo $m$.

\begin{proposition}
Write $b=ck$. If $c=0 \pmod{4}$, we have
\begin{equation}
f_b(e^{2\pi i/2k})=\frac{1}{2}e^{\frac{2\pi i}{4k}}K\left(\frac{b}{4k},\frac{b}{4k},2b\right).
\end{equation}
So, (ii) in Conjecture 2.1 now says that if $4k|b$, then $K(b/4k,b/4k,2b)=0$ precisely when $k \neq 2 \pmod{4}$ and $8 \nmid k$.

If $k$ is even and $c=2 \pmod{4}$, we have
\begin{equation}
f_b(e^{2\pi i/2k})=\frac{1}{4}ie^{\frac{2\pi i}{4k}}K\left(\frac{b}{2k},\frac{b}{2k}(1-b),4b\right).
\end{equation}
Case (i) in Conjecture 2.1 says $k$ must be even, so it now becomes the statement that if $k$ is even and $2k|b$ but $4k \nmid b$, then $K(b/2k,b(1-b)/2k,4b)=0$ precisely when $k=4 \pmod{8}$.
\end{proposition}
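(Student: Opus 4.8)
The plan is to convert $f_b(\zeta_{2k})$ into a pure exponential sum in $a$ and then recognize that sum as a Kloosterman sum by enlarging the range of summation. Write $\zeta=e^{2\pi i/2k}$ and set $N(a):=12b\,s(a,b)$, which is an integer because Zolotarev's identity (\ref{invtos}) gives $4\inv(a,b)=(b-1)(b-2)-N(a)$. Then
\[
f_b(\zeta)=\sum_{(a,b)=1}e^{\pi i\,\inv(a,b)/k}=e^{\pi i(b-1)(b-2)/4k}\sum_{(a,b)=1}e^{-\pi i N(a)/4k}.
\]
Since $t\mapsto e^{\pi i t/k}$ has period $2k$ and $2k\mid b$ in both cases, the summand depends only on $N(a)$ to a bounded $2$-adic precision modulo $b$, so the whole problem reduces to computing $N(a)$ in closed form. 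I expect the inverse $\bar a:=a^{-1}$ — and hence the Kloosterman structure — to appear precisely because $N(a)$ is governed by Dedekind reciprocity together with $s(a,b)=s(\bar a,b)$.

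The heart of the argument is the reciprocity law
\[
s(a,b)+s(b,a)=-\tfrac14+\tfrac1{12}\Big(\tfrac ab+\tfrac ba+\tfrac1{ab}\Big),
\]
cleared of denominators. Multiplying by $12ab$ and writing $M(a):=12a\,s(b,a)\in\Z$ produces the integer identity
\[
a\,N(a)+b\,M(a)=a^2+b^2+1-3ab.
\]
Multiplying through by $\bar a$ (the inverse of $a$ modulo $2b$, resp.\ $4b$) and reducing, the terms $\bar a a^2\equiv a$, $\bar a b^2\equiv 0$ (valid since $4\mid b$, so $2b\mid b^2$ and $4b\mid b^2$) and $-3b\,a\bar a\equiv -3b$ drop out at once, while the stubborn term $\bar a bM(a)$ is controlled from $M(a)=(a-1)(a-2)-4\inv(b,a)$: modulo $2$ (case $4\mid c$) or modulo $4$ (case $c\equiv2\bmod 4$) one has $M(a)\equiv(a-1)(a-2)$, and since every admissible $a$ is odd this pins down $\bar a bM(a)$. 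The outcome should be
\[
N(a)\equiv a+\bar a+b \pmod{2b}\quad(4\mid c),\qquad N(a)\equiv a(1-b)+\bar a(1-2b)\pmod{4b}\quad(c\equiv2\bmod 4),
\]
with $\bar a$ reduced modulo $2b$, resp.\ $4b$. Proving these two congruences is, to my mind, the main content of the proposition.

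Next I would feed the congruences back into the sum. In the first case $e^{-\pi iN(a)/4k}=e^{-\pi ib/4k}e^{-\pi i(a+\bar a)/4k}$, and the crucial observation is that $a+\bar a\bmod 2b$ depends only on $a\bmod b$; hence each residue $a\bmod b$ lifts to exactly two units modulo $2b$ on which the summand is constant (here $8k\mid 2b$ upgrades ``mod $2b$'' to the precision the exponential needs). Lifting thus costs a factor $\tfrac12$, and after the involution $a\mapsto-a$ one gets
\[
\sum_{(a,b)=1}e^{-\pi i(a+\bar a)/4k}=\tfrac12\sum_{(x,2b)=1}e^{\pi i(x+\bar x)/4k}=\tfrac12\,K\Big(\tfrac b{4k},\tfrac b{4k},2b\Big),
\]
using that the Kloosterman exponent $\tfrac{2\pi i}{2b}\cdot\tfrac b{4k}(x+\bar x)$ equals $\tfrac{\pi i}{4k}(x+\bar x)$. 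The second case runs in parallel: the mod-$4b$ congruence (now $8k\mid 4b$) makes the summand constant on fibers of size four (factor $\tfrac14$), after which I relabel $x\mapsto\bar x$ and normalize the linear coefficient. Here the key identity is that, because $a$ is odd and $c/2$ is odd, $e^{\pi i b x/2k}=e^{\pi i (c/2)x}=-1$, which lets me trade the coefficient $(1-2b)$ for $1$ at the price of a global sign and bring the sum into the shape $x+(1-b)\bar x$, i.e.\ into $K\big(\tfrac b{2k},\tfrac b{2k}(1-b),4b\big)$.

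The only thing left is to collapse the scalar prefactor, and this is where I expect the real difficulty to lie. In the first case the prefactor is $\tfrac12\,e^{\pi i[(b-1)(b-2)-b]/4k}$, and the stated value $\tfrac12 e^{2\pi i/4k}$ follows from the congruence $8k\mid b(b-4)$ (equivalently $(b-1)(b-2)-b\equiv 2\bmod 8k$), which is immediate once $4\mid c$. In the second case the prefactor is $e^{\pi i(b-1)(b-2)/4k}$ times the sign manufactured in the normalization step, and reducing it to $\tfrac14 i\,e^{2\pi i/4k}$ requires analyzing $(b-1)(b-2)\bmod 8k$ for $b=ck$ with $c\equiv2\bmod 4$ and $k$ even. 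The hard part will be exactly this phase bookkeeping — tracking the sign from the coefficient normalization and the fourth root of unity coming from $(b-1)(b-2)/4k$ simultaneously — and in particular I would check with care how the answer depends on the residue of $c/2$ modulo $4$, since that is where the factor $i$ and the overall sign are decided.
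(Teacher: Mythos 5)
Your plan is the paper's own proof in slightly different clothing. The paper likewise starts from reciprocity in the form (\ref{recip2}), reduces it modulo $2k$ to get $\inv(a,b)\equiv -a^{-1}\left(\frac{a-1}{2}\right)^2$ (resp.\ $-a^{-1}\left(\frac{a-1}{2}\right)^2-a^{-1}\left(\frac{a-1}{2}\right)\frac{b}{2}$) with $a^{-1}$ taken modulo $4b$, and then performs exactly your lifting step, checking that the summand is constant on the two (resp.\ four) unit lifts via $(a+b)^{-1}\equiv a^{-1}+b\pmod{2b}$ (resp.\ $(a+jb)^{-1}\equiv a^{-1}-jb\pmod{4b}$); this is what produces the factors $\frac12$ and $\frac14$. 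Your congruences $12b\,s(a,b)\equiv a+\bar a+b\pmod{2b}$ and $12b\,s(a,b)\equiv a(1-b)+\bar a(1-2b)\pmod{4b}$ are both correct and equivalent to the paper's (and your fiber-constancy claim is automatic, since your congruence holds for every integer representative of a unit mod $b$ while $s(a,b)$ depends only on the residue). Your first case is genuinely complete: the phase identity $(b-1)(b-2)-b=b(b-4)+2\equiv 2\pmod{8k}$ does follow from $4k\mid b$, so $f_b(\zeta_{2k})=\frac12 e^{2\pi i/4k}K\left(b/4k,b/4k,2b\right)$ is fully proved by your argument.

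The gap is the second case, which you explicitly leave unfinished — and, remarkably, it cannot be finished in the stated form. Carrying your plan to the end: the lift gives $\frac14\sum_{(x,4b)=1}$, your normalization $e^{\pi i (c/2)x}=-1$ trades the coefficient $(1-2b)$ for $1$ at the cost of a sign, and the total prefactor collapses to $\frac14 e^{2\pi i/4k}e^{-\pi i c/4}$. For $c\equiv 2\pmod 4$ one has $e^{-\pi i c/4}=(-i)^{c/2}$, which equals $+i$ when $c\equiv 6\pmod 8$ but $-i$ when $c\equiv 2\pmod 8$. So the displayed formula is true only for $c\equiv 6\pmod 8$ and is off by a sign otherwise; your closing suspicion that everything hinges on $c/2\bmod 4$ is exactly right. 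A concrete counterexample to the proposition as printed: $b=4$, $c=2$, $k=2$ gives $f_4(x)=1+x^3$, so $f_4(i)=1-i$, while $K(1,-3,16)=4\sqrt2$ and $\frac14\, i\, e^{2\pi i/8}\cdot 4\sqrt2=-1+i$. The same defect sits inside the paper's own proof: its final step replaces $e^{\frac{2\pi i}{8k}(2-b)}$ by $ie^{\frac{2\pi i}{4k}}$, an identity valid precisely when $b\equiv 6k\pmod{8k}$, i.e.\ $c\equiv 6\pmod 8$, which the hypotheses do not guarantee. With the corrected prefactor $\frac14(-i)^{c/2}e^{2\pi i/4k}$, your outline closes, and up to this sign it is the same proof as the paper's; note the corollary about vanishing of the Kloosterman sum is unaffected, since a sign does not change where $K$ is zero.
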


\subsection{Smallest Values of Inversion Number}

For a given $b$, we know that the smallest value of inversion number occurs at $\inv(1,b)$ and the largest at $\inv(-1,b)$. Here we prove a bound on the second smallest and second largest values, which improves on bounds given in Section 6 of [5].

\begin{proposition}
If $2 \le a \le b-2$, then 
\begin{equation}
\frac{b^2-1}{8} \le \inv(a,b) \le \frac{3b^2-12b+9}{8}.
\end{equation}
If $b$ is odd, then
\begin{equation}
\inv(2,b)=\frac{b^2-1}{8}, \hspace{1cm}\text{ and }\hspace{1cm} \inv(-2,b)=\frac{3b^2-12b+9}{8}.
\end{equation}

Translating this result into Dedekind sums using (\ref{invtos}), we get
\begin{equation}
-\frac{(b-1)(b-5)}{24b} \le s(a,b) \le \frac{(b-1)(b-5)}{24b},
\end{equation}
where
\begin{equation}
s(2,b)=\frac{(b-1)(b-5)}{24b}, \hspace{1cm}\text{ and }\hspace{1cm} s(-2,b)=-\frac{(b-1)(b-5)}{24b}.
\end{equation}
\end{proposition}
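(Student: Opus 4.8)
The plan is to prove the inversion inequalities and then read off the Dedekind-sum statement via (\ref{invtos}), since $s(a,b)=\frac{(b-1)(b-2)}{12b}-\frac{\inv(a,b)}{3b}$ converts every bound on $\inv(a,b)$ into one on $s(a,b)$ with the inequalities reversed. Two preliminary reductions make the problem one-sided. First I would record the exact extremal values: writing out $\sigma_2$ as the even residues $2,4,\dots,b-1$ followed by the odd residues $1,3,\dots,b-2$, the only inversions occur between the two increasing blocks, and a value $2m$ in the first block beats exactly the $m$ odd values $1,3,\dots,2m-1$; summing $\sum_{m=1}^{(b-1)/2}m$ gives $\inv(2,b)=\tfrac{b^2-1}{8}$ for odd $b$, and property (iii) then gives $\inv(-2,b)=\tfrac{(b-1)(b-2)}{2}-\tfrac{b^2-1}{8}=\tfrac{3b^2-12b+9}{8}$. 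Second, since $2\le a\le b-2$ is equivalent to $2\le b-a\le b-2$, property (iii) shows that the claimed upper bound for $a$ is precisely the claimed lower bound for $-a\equiv b-a$; hence it suffices to prove the single inequality $\inv(a,b)\ge\tfrac{b^2-1}{8}$ for all $2\le a\le b-2$.

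For the lower bound I would use the elementary ``forced inversion'' estimate: writing $r_i=ai\bmod b$, among the $i-1$ indices preceding $i$ at most $r_i-1$ can carry a value below $r_i$, so at least $\max(0,i-r_i)$ of them carry a larger value and form an inversion with $i$. Summing and using $\sum_i(i-r_i)=0$ yields
\begin{equation}
\inv(a,b)\ \ge\ \sum_{i=1}^{b-1}\max(0,\,i-r_i)\ =\ \tfrac12\sum_{i=1}^{b-1}\bigl\lvert\,ai\bmod b-i\,\bigr\rvert ,
\end{equation}
so it is enough to show that the displacement sum $D(a)=\sum_{i=1}^{b-1}\lvert ai\bmod b-i\rvert$ satisfies $D(a)\ge\tfrac{b^2-1}{4}$. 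The key tool is that $ai\bmod b-i\equiv(a-1)i\pmod b$, so each summand is at least the absolute value of the least-magnitude residue of $(a-1)i$ modulo $b$. Bounding $D(a)$ by this ``least absolute residue'' sum is a clean computation: if $g=\gcd(a-1,b)$ and $N=b/g$, the residues $(a-1)i\bmod b$ run through the nonzero multiples of $g$ with multiplicity, giving $D(a)\ge g^2\lfloor N^2/4\rfloor$, i.e. $D(a)\ge\tfrac{b^2}{4}$ when $N$ is even and $D(a)\ge\tfrac{b^2-g^2}{4}$ when $N$ is odd. This already establishes $D(a)\ge\tfrac{b^2-1}{4}$ whenever $\gcd(a-1,b)=1$ or $b/\gcd(a-1,b)$ is even, covering the extremal multiplier $a=2$, where equality holds.

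The hard part is the residual case $g=\gcd(a-1,b)>1$ with $N=b/g$ odd, where the least-absolute-residue bound yields only $\tfrac{b^2-g^2}{4}$ and I must recover the extra $\tfrac{g^2-1}{4}$. The structure to exploit is that $a\equiv1\pmod g$ together with $g\mid b$ forces $r_i\equiv i\pmod g$, so every displacement $r_i-i$ is a multiple of $g$ and $\sigma_a$ preserves each residue class modulo $g$, acting on the class of $c$ as the affine map $j\mapsto aj+\tfrac{(a-1)c}{g}$ on $\Z/N\Z$. This writes $D(a)/g$ as a sum of $g$ displacement sums of affine permutations of $\Z/N\Z$, and I expect to finish by a descent on $b$ (the modulus $N$ is strictly smaller), with the shifts only increasing the displacement. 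Controlling affine rather than purely multiplicative displacements, and summing the resulting gains over the $g$ classes, is where I anticipate the real work.

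Once $D(a)\ge\tfrac{b^2-1}{4}$ is established, the inversion bounds follow, and substituting into $s(a,b)=\frac{(b-1)(b-2)}{12b}-\frac{\inv(a,b)}{3b}$ gives the displayed Dedekind-sum inequalities; the equality cases reduce to checking $\inv(2,b)=\tfrac{b^2-1}{8}$ against $s(2,b)=\tfrac{(b-1)(b-5)}{24b}$, which is immediate from the same identity.
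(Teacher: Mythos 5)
Your reductions are sound: the symmetry reduction to the single lower bound, the direct computation $\inv(2,b)=\frac{b^2-1}{8}$, the inequality $\inv(a,b)\ge\frac12\sum_i|ai\bmod b-i|$ (one half of the Diaconis--Graham inequality, correctly rederived), and the least-absolute-residue bound that settles the cases $\gcd(a-1,b)=1$ and $b/\gcd(a-1,b)$ even. But the remaining case --- $g=\gcd(a-1,b)>1$ with $N=b/g$ odd --- is exactly where your proof stops being a proof: you defer it to a hoped-for descent, and the heuristic you lean on (``the shifts only increasing the displacement'') is false. For the affine maps of $\Z/N\Z$ that arise, adding a shift can strictly \emph{decrease} total displacement: on $\Z/5\Z$ the map $j\mapsto 4j$ has displacement sum $0+3+1+1+3=8$, while $j\mapsto 4j+1$ has $1+1+2+0+2=6$. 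Worse, in the sub-case $a\equiv 1\pmod{N}$ (e.g.\ $b=9$, $a=4$, where $g=N=3$), the residue class $c=0$ is fixed pointwise by $\sigma_a$ and contributes zero displacement, so any per-class argument that bounds a class below by its shift-free model is structurally unable to reach the needed total $\frac{b^2-1}{4}$: the entire deficit $\frac{g^2-1}{4}$ must be recovered from the interplay between the multiplier $a\bmod N$ and the nonzero shifts $t_c=(a-1)c/g\bmod N$, and nothing in your sketch does that. (Numerically $\sum_i|ai\bmod b-i|\ge\frac{b^2-1}{4}$ does appear to hold, so the route is not obviously doomed, but in your write-up the hard case is genuinely open, not a routine verification.)

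For comparison, the paper's proof avoids displacements entirely and closes all cases uniformly: it is a strong induction on $b$ using the reciprocity formula (\ref{recip2}) in the form $\inv(a,b)=\frac{(a-1)(b-1)(a+b-1)}{4a}-\frac{b}{a}\inv(b,a)$. When $b\not\equiv\pm1\pmod{a}$ it feeds in the inductive upper bound $\inv(b,a)\le\frac{3a^2-12a+9}{8}$; when $b\equiv1\pmod{a}$ or $b\equiv-1\pmod{a}$ it uses the exact values $\inv(b,a)=0$ or $\frac{(a-1)(a-2)}{2}$. In each case $\inv(a,b)$ is bounded below by an explicit rational function of $a$, which is then minimized by elementary calculus over the admissible interval (using $a\le(b+1)/2$ in the case $b\equiv-1\pmod{a}$), and the minimum is exactly $\frac{b^2-1}{8}$. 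To finish your argument you would either need an actual proof of the odd-$N$, $g>1$ displacement bound, or you should switch to this reciprocity-plus-induction scheme, for which no case distinction on $\gcd(a-1,b)$ is ever needed.
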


The above proposition is part of a larger conjecture about the six smallest values of inversion number.

\begin{conjecture}
Let $I_n(b)$ be the $n$th smallest value of $\inv(a,b)$ for fixed $b$ and $1 \le n \le 6$. Then
\begin{equation}
I_n(b) \ge \frac{(n-1)(b+1)(b+1-n)}{4n},
\end{equation}
with equality occurring with 
\begin{equation}
\inv(n,b)=\frac{(n-1)(b+1)(b+1-n)}{4n}
\end{equation}
when $b=-1 \pmod{n}$.
\end{conjecture}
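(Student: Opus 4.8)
The statement has two halves: an exact evaluation (the equality case) and a uniform lower bound on the $n$th smallest value. I would settle the equality first, since it is the cleaner half and it dictates the correct shape of the bound. The plan is to compute $\inv(n,b)$ directly from Dedekind reciprocity,
\[
s(n,b)+s(b,n)=-\frac14+\frac1{12}\left(\frac nb+\frac bn+\frac1{nb}\right).
\]
When $b\equiv-1\pmod n$ we have $s(b,n)=s(b\bmod n,\,n)=s(-1,n)=-s(1,n)=-\frac{(n-1)(n-2)}{12n}$, since $s(1,n)$ is the maximal Dedekind sum of denominator $n$ (fact (ii)). Solving for $s(n,b)$ and substituting into (\ref{invtos}), the polynomial $b^2-(n-2)b-(n-1)$ factors as $(b+1)(b+1-n)$ and one obtains exactly $\inv(n,b)=\frac{(n-1)(b+1)(b+1-n)}{4n}$. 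This computation uses no restriction $n\le 6$; it works for every $n$ as soon as $b\equiv-1\pmod n$.

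Next I would establish the pointwise bound $\inv(a,b)\ge B_a:=\frac{(a-1)(b+1)(b+1-a)}{4a}$ for \emph{every} $a$, via the same reciprocity identity, now using only the one-sided inequality $s(b\bmod a,\,a)\ge s(-1,a)=-\frac{(a-1)(a-2)}{12a}$ (minimality of $s(-1,a)$ among Dedekind sums of denominator $a$). The identical algebra yields $\inv(a,b)\ge B_a$, with equality precisely when $b\equiv-1\pmod a$; the equality case of the previous paragraph is exactly this, and at $a=2$ it recovers Proposition 2.5.

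To pass from the pointwise bound to $I_n(b)\ge B_n$ I would count how many distinct values can lie below $B_n$. Writing $\bar a$ for the inverse of $a$ modulo $b$ and using $\inv(a,b)=\inv(\bar a,b)$, any value $\inv(a,b)<B_n$ forces both $B_a<B_n$ and $B_{\bar a}<B_n$. Now $B_c=B_n$ is equivalent to $c+\frac{b+1}{c}=n+\frac{b+1}{n}$, a quadratic in $c$ with roots $c=n$ and $c=\frac{b+1}{n}$ (product $b+1$); hence $B_c<B_n$ iff $c<n$ or $c>\frac{b+1}{n}$. Therefore either $a<n$ or $\bar a<n$, in which case $\inv(a,b)$ equals one of $\inv(1,b),\dots,\inv(n-1,b)$ and contributes no new value, \emph{or} both $a$ and $\bar a$ exceed $\frac{b+1}{n}$. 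This last case is the crux. To upgrade the resulting inequality to the asserted equality $I_n(b)=B_n$ one must also know that $\inv(1,b),\dots,\inv(n-1,b)$ are distinct and all lie below $B_n$; since $\inv(s,b)=\frac{s-1}{4s}b^2+O(b)$ and $\frac{s-1}{4s}<\frac{n-1}{4n}$ for $s<n$, this holds for all large $b$, with the finitely many small $b$ checked by hand.

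The main obstacle is precisely the remaining case in which $a,\bar a,b-a,b-\bar a$ are all bounded away from $0$: then $a/b$ admits no small-denominator rational approximation and $\inv(a,b)$ should sit near the median $\frac{(b-1)(b-2)}{4}>B_n$, yet the one-step reciprocity bound only returns $\inv(a,b)\ge B_a$, which is \emph{weaker} than $B_n$ once $a>\sqrt{b+1}$. Proving $\inv(a,b)\ge B_n$ here appears to require a genuinely two-sided estimate — iterating reciprocity along the continued fraction of $a/b$ (a Barkan--Hickerson--Knuth style expansion of $s(a,b)$ in partial quotients) to show that $s(a,b)$ can be large only when some partial quotient forces $a$ or $\bar a$ to be congruent to an integer of absolute value $<n$. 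For $n=2$ one can sidestep this with the Diaconis--Graham inequality $\inv(\sigma_a)\ge\frac12\sum_{i=1}^{b-1}\lvert\,ai\bmod b-i\,\rvert$, whose right-hand side equals $\frac{b^2-1}{8}$ at $a=2$ and is minimized there; but this footrule bound is already slack at $a=3$ (for instance $b=8$ gives $\frac12\sum=8<9=B_3$), so it does not extend, and handling the middle range uniformly for $3\le n\le 6$ is the hard part — and, I expect, the reason the statement is posed as a conjecture rather than a theorem.
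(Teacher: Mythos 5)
First, a point of orientation: the paper does not prove this statement. It is one of the paper's conjectures, and the only guidance given is the remark that it might be provable ``in a manner similar to the proof of Proposition 2.6'' (the $n=2$ case, which you cite as Proposition 2.5). So there is no proof of record to compare yours against; what can be compared is your partial progress versus the paper's proved $n=2$ case and its suggested strategy. Your positive results are correct. The equality $\inv(n,b)=\frac{(n-1)(b+1)(b+1-n)}{4n}$ for $b\equiv-1\pmod{n}$ follows from reciprocity with $\inv(b,n)=\inv(-1,n)=\frac{(n-1)(n-2)}{2}$, and as you note it needs no restriction $n\le 6$; this is literally the computation the paper performs and calls $H(a)$ in the proof of Proposition 2.6. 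Likewise the pointwise bound $\inv(a,b)\ge B_a=\frac{(a-1)(b+1)(b+1-a)}{4a}$ with equality exactly when $b\equiv-1\pmod a$, and the reduction via $\inv(a,b)=\inv(\bar a,b)$ to the case where both $a$ and $\bar a$ exceed $(b+1)/n$, are sound.

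The gap is the one you name: the case $a,\bar a>(b+1)/n$ is untouched, and it is worth stressing that this gap already blocks the $n=2$ case, not only $3\le n\le 6$ --- your one-step argument uses only the trivial estimate $\inv(b\bmod a,\,a)\le\frac{(a-1)(a-2)}{2}$, which yields $\inv(a,b)\ge B_2$ only for $2\le a\le(b+1)/2$. The paper closes that middle range by a different mechanism than anything in your proposal: induction on $b$, where for $b\not\equiv\pm1\pmod a$ the \emph{inductive} second-largest bound $\inv(b,a)\le\frac{3a^2-12a+9}{8}$ is fed back into reciprocity (giving the function $F(a)$, then an endpoint analysis on $[3,b-2]$). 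Your suggestion of iterating reciprocity along the continued fraction of $a/b$ is essentially this induction unrolled, and the route the paper envisions for $n\le6$ is the corresponding simultaneous induction on the $n$ smallest and $n$ largest values, with cases $b\equiv\pm j\pmod a$ for $0\le j\le n-1$ against the rest; nobody has carried this out, which is why it remains a conjecture. Two further cautions. Your Diaconis--Graham remark rests on the unproved assertion that the footrule of $\sigma_a$ is minimized at $a=2$, so even your proposed $n=2$ workaround is not complete as written. And the restriction to $n\le6$ is itself a warning about the hard case: if the region $a,\bar a>(b+1)/n$ never produced values below $B_n$, your counting argument would prove the conjecture for \emph{every} $n$; so the bound presumably fails in that region for larger $n$, and any correct proof must exploit $n\le 6$ at exactly the spot you left open.
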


We suspect that this conjecture can be proved in a manner similar to the proof of Proposition 2.6 given below.

The following result is a novel restriction of $a_1$ and $a_2$ such that $s(a_1,b)=s(a_2,b)$.

\begin{proposition}
Suppose we have $a_1$ and $a_2$ such that $b=r \pmod{a_1}$, $b=r \pmod{a_2}$, $a_1=a_2=s\pmod{r}$, and $\inv(a_1,b)=\inv(a_2,b)$. Then $a_1=a_2$.
\end{proposition}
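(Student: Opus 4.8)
The plan is to translate the hypothesis $\inv(a_1,b)=\inv(a_2,b)$ into the equality of Dedekind sums $s(a_1,b)=s(a_2,b)$ via (\ref{invtos}), and then to exploit Dedekind reciprocity twice, peeling off one argument at a time. Recall the reciprocity law: for coprime positive integers $h,k$,
\[
s(h,k)+s(k,h)=-\frac14+\frac1{12}\left(\frac hk+\frac kh+\frac1{hk}\right),
\]
and recall that $s(h,k)$ depends only on $h \bmod k$ (since the summand $\llrr{hi/k}$ depends only on $hi \bmod k$). Because $\gcd(a_i,b)=1$ and $b\equiv r\pmod{a_i}$, we also have $\gcd(r,a_i)=\gcd(b,a_i)=1$, so every Dedekind sum appearing below is well defined; we take $a_1,a_2$ in the range $2\le a_i\le b-1$ with $1\le r<a_i$, as forced by the congruence conditions together with coprimality.

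First I would apply reciprocity to $s(a_1,b)$ and $s(a_2,b)$ to write each in terms of $s(b,a_i)$, and then use periodicity $s(b,a_i)=s(r,a_i)$. Applying reciprocity a second time expresses $s(r,a_i)$ in terms of $s(a_i,r)$, and periodicity gives $s(a_i,r)=s(s,r)$ for both $i=1,2$; these two terms are identical and so cancel in the difference. After substituting the hypothesis $s(a_1,b)=s(a_2,b)$, what remains is an identity purely among the rational correction terms $\frac1{12}\left(\frac hk+\frac kh+\frac1{hk}\right)$ evaluated at the argument pairs $(a_i,b)$ and $(r,a_i)$; the $-\tfrac14$ and the cancelled $s(s,r)$ no longer appear.

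Writing $D=a_1-a_2$ and $u=\tfrac1{a_1}-\tfrac1{a_2}=-D/(a_1a_2)$, I expect the surviving relation to collapse, after clearing denominators, to
\[
\frac Db+\left(b+\frac1b\right)u=\frac Dr+\left(r+\frac1r\right)u,
\]
which rearranges (up to the nonzero factor $1/(rb\,a_1a_2)$) to $(r-b)\bigl(a_1a_2+br-1\bigr)\,D=0$. The final step is then purely arithmetic: one shows the two cofactors cannot vanish. We have $r\neq b$ because $r<a_i\le b-1$, and $a_1a_2+br-1>0$ because $a_1,a_2\ge 2$ and $b,r\ge1$. Hence $D=0$, i.e.\ $a_1=a_2$.

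The main obstacle is not the reciprocity bookkeeping, which is routine, but the endgame: one must pin down the admissible ranges of $a_1,a_2,r$ precisely enough to guarantee that $(r-b)(a_1a_2+br-1)$ is nonzero. The spurious solution $a_1a_2=1-br$ is ruled out only because the $a_i$ are positive residues, and $r=b$ is ruled out only because $r$ is a genuine remainder modulo $a_i\le b-1$. Care is likewise needed to verify at each stage the coprimality that legitimizes reciprocity, which follows throughout from $\gcd(r,a_i)=\gcd(b,a_i)=1$ together with $\gcd(s,r)=\gcd(a_i,r)=1$.
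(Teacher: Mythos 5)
Your proof is correct, and it arrives at exactly the same algebraic endgame as the paper: both arguments boil down (up to a nonzero rational factor) to the factorization $(b-r)(a_1-a_2)(a_1a_2+br-1)=0$, and both dispose of the spurious root $a_1a_2=1-br$ because the residues are positive, leaving $a_1=a_2$. The routes to that factorization differ in a way worth noting. The paper works with inversion numbers and first asserts, inductively via repeated use of reciprocity in the form (\ref{recip2}), an explicit formula for $\inv(a,b)$ as a quadratic in $b$ whose coefficients involve the \emph{entire} Euclidean remainder sequence $b,a,r_1,\dots,1$; the hypotheses $b\equiv r\pmod{a_i}$ and $a_1\equiv a_2\equiv s\pmod{r}$ then force the two remainder sequences to agree from $r$ onward, so all terms past the first cancel when the two formulas are subtracted. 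You achieve the same cancellation with far less machinery: two applications of Dedekind reciprocity combined with periodicity, $s(b,a_i)=s(r,a_i)$ and $s(a_i,r)=s(s,r)$, make the common term $s(s,r)$ drop out of the difference immediately, so you never need the general remainder-sequence formula --- which the paper only sketches (``one can show inductively''). Your version is therefore shorter and more self-contained for this particular proposition, while the paper's formula is a stronger statement with independent interest beyond it. The only point to tighten in your write-up is the normalization at the start: you should record why $r\ge 1$ and $a_i\ge 2$ may be assumed (if $r=0$ then $a_i\mid b$, forcing $a_i=1$ by coprimality, and $a=1$ is the unique residue with $\inv(a,b)=0$, so that case is immediate); this is the same implicit assumption the paper makes, so it is a matter of completeness rather than a gap.
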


\section{Proofs of Propositions}

Many of the proofs rely on the following well-known theorem, known as the reciprocity formula for Dedekind sums.

\begin{theorem}
For $(a,b)=1$, 
\begin{equation}\label{recip1}
s(a,b) + s(b,a) = \frac{1}{12}\left(\frac{a}{b}+\frac{1}{ab}+\frac{b}{a}\right)-\frac{1}{4}.
\end{equation}
In terms of inversion number, (\ref{invtos}) and (\ref{recip1}) become
\begin{equation}\label{recip2}
a\inv(a,b)+b\inv(b,a)=\frac{(a-1)(b-1)(a+b-1)}{4}.
\end{equation}
\end{theorem}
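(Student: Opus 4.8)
The plan is to prove the classical reciprocity law (\ref{recip1}) and then read off the inversion-number form (\ref{recip2}) as a corollary. For the corollary I would substitute $\inv(a,b)=-3bs(a,b)+\frac{(b-1)(b-2)}{4}$ from (\ref{invtos}), together with the analogous expression for $\inv(b,a)$, into $a\inv(a,b)+b\inv(b,a)$; the Dedekind sums collect into the single term $-3ab\,(s(a,b)+s(b,a))$, into which (\ref{recip1}) is inserted, and what remains is a polynomial identity in $a$ and $b$ that expands directly to $\frac{(a-1)(b-1)(a+b-1)}{4}$. This verification is purely mechanical, so all of the real content lies in (\ref{recip1}).

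To prove (\ref{recip1}) I would use the finite Fourier (cotangent) representation $s(h,k)=\frac{1}{4k}\sum_{\mu=1}^{k-1}\cot\frac{\pi\mu}{k}\cot\frac{\pi h\mu}{k}$, which follows from the Fourier expansion of the sawtooth $\llrr{x}$, and then establish reciprocity among these cotangent sums by a single residue computation. Consider the meromorphic function $g(z)=\cot(\pi z)\cot(\pi h z)\cot(\pi k z)$, which has period $1$. Because $(h,k)=1$, its real poles away from the origin are simple: at $z=\mu/k$ for $1\le\mu\le k-1$ the factor $\cot(\pi k z)$ gives a simple pole with residue proportional to $\cot\frac{\pi\mu}{k}\cot\frac{\pi h\mu}{k}$, so these residues assemble into a multiple of $s(h,k)$, and symmetrically the poles at $z=\lambda/h$ assemble into a multiple of $s(k,h)$. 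At $z=0$ all three factors blow up, producing a triple pole whose residue, read off from $\cot(\pi z)=\frac{1}{\pi z}-\frac{\pi z}{3}-\cdots$, is proportional to $\frac{1}{hk}(1+h^2+k^2)=\frac{1}{hk}+\frac{h}{k}+\frac{k}{h}$, i.e.\ the symmetric combination $\frac{1}{12}\left(\frac{h}{k}+\frac{k}{h}+\frac{1}{hk}\right)$ after normalization.

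I would then integrate $g$ around a rectangle whose vertical sides are one period apart (so that by the invariance $g(z+1)=g(z)$ their contributions cancel, and whose strip encloses exactly one copy of each rational pole plus the origin) and whose horizontal sides recede to $\pm i\infty$. The essential subtlety is that $g$ does not vanish at infinity: as $z\to\pm i\infty$ each cotangent tends to $\mp i$, so $g$ tends to a nonzero constant and the horizontal sides contribute a finite amount. This boundary term is precisely what produces the $-\frac{1}{4}$ in (\ref{recip1}); setting the full contour integral equal to $2\pi i$ times the sum of the enclosed residues and rearranging yields $s(h,k)+s(k,h)=\frac{1}{12}\left(\frac{h}{k}+\frac{k}{h}+\frac{1}{hk}\right)-\frac{1}{4}$.

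The step I expect to be the main obstacle is the accounting that reconciles the boundary contribution with the residues: one must evaluate the triple pole at the origin \emph{exactly} (not merely to leading order) to secure the correct $\frac{1}{12}$ coefficient, and at the same time handle the non-decaying horizontal-side integral carefully enough to extract the exact constant $-\frac{1}{4}$ rather than merely confirming convergence. A purely real-variable route is available, reducing $s(h,k)$ to the weighted floor sum $\sum_{r=1}^{k-1}r\lfloor hr/k\rfloor$ and counting lattice points, but I would not rely on the naive central reflection $(r,y)\mapsto(k-r,h-y)$ of the rectangle $[1,k-1]\times[1,h-1]$ alone: that symmetry relates the floor sum only to itself and leaves a genuinely non-elementary defect, so making the real-variable argument succeed requires either the finer bookkeeping of a three-dimensional tetrahedron count or, in effect, the contour argument above.
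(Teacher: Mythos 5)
Your proposal is correct, but note that the paper itself offers no proof of this theorem: it is stated as well known, with the reciprocity law deferred to the literature (Rademacher's monograph, the paper's reference [4]) and simply used thereafter, so there is no internal argument to compare against. Your route is precisely the classical contour-integration proof found in that cited reference — the residues of $\cot(\pi z)\cot(\pi h z)\cot(\pi k z)$ at $z=\mu/k$ and $z=\lambda/h$ assembling into $\tfrac{4}{\pi}\bigl(s(h,k)+s(k,h)\bigr)$, the triple pole at the origin contributing $-\tfrac{1+h^2+k^2}{3\pi hk}$, and the non-decaying horizontal sides contributing the constant that becomes the $-\tfrac{1}{4}$ — and your mechanical substitution of (\ref{invtos}) into $a\inv(a,b)+b\inv(b,a)$ does expand exactly to $\tfrac{(a-1)(b-1)(a+b-1)}{4}$, so the derivation of (\ref{recip2}) is sound as well.
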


\begin{proof}[Proof of Proposition 2.2]
First take the case where $b$ is odd. Then we use Zolotarev's theorem
\begin{equation}
(-1)^{\inv(a,b)}=\left(\frac{a}{b}\right),
\end{equation}
where $\left(\frac{a}{b}\right)$ is the Jacobi symbol, to compute
\begin{equation}
f_b(-1)=\sum_{(a,b)=1}(-1)^{\inv(a,b)}=\sum_{(a,b)=1}\left(\frac{a}{b}\right)=\begin{cases}0 & \text{ if $b$ is not a square}\\\varphi(b) & \text{ if $b$ is a square}\end{cases}.
\end{equation}

Now take the case where $b$ is even. If $4|b$, then from 
\begin{equation}
\inv(a,b)+\inv(-a,b)=\frac{(b-1)(b-2)}{2}=1 \pmod{2},
\end{equation}
we know one of $\inv(a,b)$ and $\inv(-a,b)$ is even and the other is odd. It follows that
\begin{equation}
f_b(-1)=\sum_{(a,b)=1}(-1)^{\inv(a,b)}=0.
\end{equation}
If $b=2 \pmod{4}$, then from reducing (\ref{recip2}) modulo 2,
\begin{align}
\inv(a,b)=\frac{1}{4}a^{-1}(a-1)(b-1)(a+b-1)=\left(\frac{a-1}{2}\right)\left(\frac{a-1}{2}+\frac{b}{2}\right) \nonumber\\
=\left(\frac{a-1}{2}\right)\left(\frac{a-1}{2}+1\right)=0 \pmod{2}.
\end{align}
It follows that $f_b(-1)=\varphi(b)$.
\end{proof}

\begin{proof}[Proof of Proposition 2.3]
First we show that $-1$ is a root of $f_b'(x)$. Then we show that when $b$ is not divisible by three, $e^{\pi i/3}$ is a root of both $f_b(x)$ and $f_b'(x)$. First compute
\begin{equation}
\inv(-a,b)\left(\frac{-a}{b}\right)=\left(\frac{(b-1)(b-2)}{2}-\inv(a,b)\right)\left(\frac{a}{b}\right),
\end{equation}
because of (6) and $\left(\frac{-1}{b}\right)=1$. In other words,
\begin{equation}
\inv(-a,b)\left(\frac{-a}{b}\right)+\inv(a,b)\left(\frac{a}{b}\right)=\frac{(b-1)(b-2)}{2}\left(\frac{a}{b}\right).
\end{equation}
Write
\begin{equation}
x f_b'(x)=\sum_{(a,b)=1}\inv(a,b) x^{\inv(a,b)}=\frac{1}{2}\left(\sum_{(a,b)=1}\inv(a,b) x^{\inv(a,b)}+\sum_{(a,b)=1}\inv(-a,b) x^{\inv(-a,b)}\right)
\end{equation}
so that
\begin{align}
-2f_b'(-1)&=\sum_{(a,b)=1}\inv(a,b) \left(\frac{a}{b}\right)+\sum_{(a,b)=1}\inv(-a,b) \left(\frac{-a}{b}\right) \nonumber\\
&=\frac{(b-1)(b-2)}{2}\sum_{(a,b)=1}\left(\frac{a}{b}\right)=0.
\end{align}
Now assume $b$ is not divisible by three. Then, by (iv) in Section 1.4,
\begin{align}
f_b(e^{\pi i/3})&=\sum_{(a,b)=1}(e^{\pi i/3})^{\inv(a,b)}=\sum_{(a,b)=1}(e^{\pi i})^{\inv(a,b)/3} \nonumber\\
&=\sum_{(a,b)=1}(-1)^{\inv(a,b)/3}=\sum_{(a,b)=1}((-1)^3)^{\inv(a,b)/3}=0.
\end{align}
Using the same reasoning, we see that $f_b'(e^{\pi i/3})=0$.
\end{proof}

\begin{proof}[Proof of Proposition 2.4]
Reduce (\ref{recip2}) modulo $k$ to get
\begin{equation}
4a\inv(a,b)=-(a-1)^2 \pmod{k}.
\end{equation}
Then, because $b$ is odd, we have from Zolotarev's theorem
\begin{equation}
\inv(a,b)=\frac{1}{2}\left(\left(\frac{a}{b}\right)-1\right) \pmod{2}
\end{equation}
and the Chinese remainder theorem,
\begin{equation}
\inv(a,b)=-(4a)^{-1}(a-1)^2(k+1) + \frac{1}{2}\left(\left(\frac{a}{b}\right)-1\right)k \pmod{2k}.
\end{equation}
Now we show that $e^{2\pi i/(2k)}$ is a root of $f_b(x)$ and $e^{2\pi i/(6k)}$ is a root as well if $b$ is not divisible by 3. Compute
\begin{align}
f_b(e^{2\pi i/(2k)})&=\sum_{(a,b)=1} e^{2\pi i \inv(a,b) \frac{1}{2k}} \nonumber\\
&=\sum_{(a,b)=1} e^{\pi i \frac{1}{k}\left(-(4a)^{-1}(a-1)^2(k+1) + \frac{1}{2}\left(\left(\frac{a}{b}\right)-1\right)k\right)} \nonumber\\
&=\sum_{(a,b)=1} e^{\pi i \frac{1}{k}\left(-(4a)^{-1}(a-1)^2(k+1)\right)}(-1)^{\frac{1}{2}\left(\left(\frac{a}{b}\right)-1\right)} \nonumber\\
&=\sum_{(a,b)=1} e^{\pi i \frac{1}{k}\left(-(4a)^{-1}(a-1)^2(k+1)\right)}\left(\frac{a}{b}\right).
\end{align}
Now, write $a=qk+r$ for $q$ and $r$ such that $(r,k)=1$ and $(qk+r,c)=1$. This is a valid parameterization of the values of $a$ because $(c,k)=1$. The above becomes
\begin{align}
&=\sum_{(r,k)=1} \sum_{(qk+r,c)=1} e^{\pi i\frac{1}{k}\left(-(4(qk+r))^{-1}(qk+r-1)^2(k+1)\right)}\left(\frac{qk+r}{ck}\right) \nonumber\\
&=\sum_{(r,k)=1} \sum_{(qk+r,c)=1} e^{\pi i\frac{1}{k}\left(-(4r)^{-1}(qk+r-1)^2(k+1)\right)}\left(\frac{qk+r}{c}\right)\left(\frac{r}{k}\right) \nonumber\\
&=\sum_{(r,k)=1} e^{\pi i \frac{1}{k}\left(-(4r)^{-1}(r-1)^2(k+1)\right)}\left(\frac{r}{k}\right)\sum_{(qk+r,c)=1}e^{\pi i\frac{1}{k}\left(-(4r)^{-1}\big(q^2k^2+2qk(r-1)\big)(k+1)\right)}\left(\frac{qk+r}{c}\right) \nonumber\\
&=\sum_{(r,k)=1} e^{\pi i \frac{1}{k}\left(-(4r)^{-1}(r-1)^2(k+1)\right)}\left(\frac{r}{k}\right)\sum_{(qk+r,c)=1}e^{\pi i\left(-(4r)^{-1}\big(q^2k+2q(r-1)\big)(k+1)\right)}\left(\frac{qk+r}{c}\right) \nonumber\\
&=\sum_{(r,k)=1} e^{\pi i \frac{1}{k}\left(-(4r)^{-1}(r-1)^2(k+1)\right)}\left(\frac{r}{k}\right)\sum_{(qk+r,c)=1}e^{\pi i\left(-(4r)^{-1}q^2k(k+1)\right)}\left(\frac{qk+r}{c}\right).
\end{align}
Then, $k(k+1)$ is always even, so this is
\begin{align}
&=\sum_{(r,k)=1} e^{\pi i \frac{1}{k}\left(-(4r)^{-1}(r-1)^2(k+1)\right)}\left(\frac{r}{k}\right)\sum_{(qk+r,c)=1}\left(\frac{qk+r}{c}\right)=0
\end{align}
because $c$ is not a square. If $b$ is not divisible by three, then $e^{2\pi i/(6k)}$ is a root for the same reason as in the proof of Proposition 2.3.
\end{proof}

\begin{proof}[Proof of Proposition 2.5]
If $c=0\pmod{4}$, then $4k|b$. Then we know
\begin{equation}
\inv(a,b)=-a^{-1}\left(\frac{a-1}{2}\right)^2 \pmod{2k}
\end{equation}
by reducing (\ref{recip2}) modulo $2k$. Here we denote $a^{-1}$ as the inverse of $a$ modulo $4b$. Then,
\begin{align}
f_b(e^{2\pi i/2k}) &= \sum_{(a,b)=1} e^{\frac{2\pi i}{2k}\left(-a^{-1}\left(\frac{a-1}{2}\right)^2\right)} \nonumber\\
&=\sum_{(a,b)=1} e^{\frac{2\pi i}{8k}\left(-a^{-1}(a^2-2a+1)\right)} \nonumber\\
&=e^{\frac{2\pi i}{4k}}\sum_{(a,b)=1}e^{\frac{2\pi i}{8k}(a+a^{-1})} \nonumber\\
&=e^{\frac{\pi i}{4k}}\sum_{(a,b)=1}e^{\frac{2\pi i}{8b}c(a+a^{-1})}. \label{prop25ii}
\end{align}
Because $c=0 \pmod{4}$, we have $(a+b)^{-1}=a^{-1}+b \pmod{2b}$. To see this, 
\begin{equation}
(a+b)(a^{-1}+b)=1+(a+a^{-1})b+b^2=1 \pmod{2b}
\end{equation}
because $a$ and $a^{-1}$ are both odd and $b$ is even.
\begin{align}
K\left(\frac{c}{4},\frac{c}{4},2b\right)&=\sum_{(a,2b)=1}e^{\frac{2\pi i}{8b}c(a+a^{-1})} \nonumber\\
&=\sum_{(a,b)=1}e^{\frac{2\pi i}{8b}c(a+a^{-1})}+\sum_{(a,b)=1}e^{\frac{2\pi i}{8b}c((a+b)+(a+b)^{-1})} \nonumber\\
&=\sum_{(a,b)=1}e^{\frac{2\pi i}{8b}c(a+a^{-1})}+e^{\frac{2\pi i}{4}c}\sum_{(a,b)=1}e^{\frac{2\pi i}{8b}c(a+a^{-1})} \nonumber\\
&=2\sum_{(a,b)=1}e^{\frac{2\pi i}{8b}c(a+a^{-1})}.
\end{align}
Hence, (\ref{prop25ii}) is
\begin{equation}
f_b(e^{2\pi i/2k})=\frac{1}{2}e^{\frac{2\pi i}{4k}}K\left(\frac{b}{4k},\frac{b}{4k},2b\right).
\end{equation}
Now assume $k$ is even. If $c=2 \pmod{4}$, we need to use
\begin{equation}
\inv(a,b)=-a^{-1}\left(\frac{a-1}{2}\right)^2-a^{-1}\left(\frac{a-1}{2}\right)\frac{b}{2} \pmod{2k},
\end{equation}
which is again (\ref{recip2}) reduced modulo $2k$. So then the analogous form of (\ref{prop25ii}) is
\begin{align}
f_b(e^{2\pi i/2k}) &= \sum_{(a,b)=1} e^{\frac{2\pi i}{2k}\left(-a^{-1}\left(\frac{a-1}{2}\right)^2-a^{-1}\left(\frac{a-1}{2}\right)\frac{b}{2}\right)} \nonumber\\
&=\sum_{(a,b)=1}e^{-\frac{2\pi i}{8k}a^{-1}(a^2-2a+1+ab-b)} \nonumber\\
&=\sum_{(a,b)=1}e^{-\frac{2\pi i}{8k}(a-2+a^{-1}+b-ba^{-1})} \nonumber\\
&=e^{\frac{2\pi i}{8k}(2-b)}\sum_{(a,b)=1}e^{\frac{2\pi i}{8k}(a+(1-b)a^{-1})} \nonumber\\
&=i e^{\frac{2\pi i}{4k}}\sum_{(a,b)=1}e^{\frac{2\pi i}{8b}c(a+(1-b)a^{-1})}. \label{prop25i}
\end{align}
Following the steps we did before, we see that $(a+jb)^{-1}=a^{-1}-jb \pmod{4b}$. To see this, 
\begin{equation}
(a+jb)(a^{-1}-jb)=1-(a-a^{-1})jb-j^2b^2=1-a^{-1}(a^2-1)jb-j^2b^2=1 \pmod{4b}
\end{equation}
because $a^2-1$ and $b$ are both divisible by four. Then,
\begin{align}
K\left(\frac{c}{2},\frac{c}{2}(1-b),4b\right)&=\sum_{(a,4b)=1}e^{\frac{2\pi i}{8b}c(a+(1-b)a^{-1})} \nonumber\\
&=\sum_{j=0}^3\sum_{(a,b)=1}e^{\frac{2\pi i}{8b}c((a+jb)+(1-b)(a+jb)^{-1})} \nonumber\\
&=\sum_{j=0}^3\sum_{(a,b)=1}e^{\frac{2\pi i}{8b}c((a+jb)+(1-b)(a^{-1}-jb))} \nonumber\\
&=\sum_{j=0}^3\sum_{(a,b)=1}e^{\frac{2\pi i}{8b}c(a+(1-b)a^{-1}+jb^2)} \nonumber\\
&=4\sum_{(a,b)=1}e^{\frac{2\pi i}{8b}c(a+(1-b)a^{-1})}.
\end{align}
So (\ref{prop25i}) becomes
\begin{equation}
f_b(e^{2\pi i/2k})=\frac{1}{4}ie^{\frac{2\pi i}{4k}}K\left(\frac{b}{2k},\frac{b}{2k}(1-b),4b\right).
\end{equation}
\end{proof}

\begin{proof}[Proof of Proposition 2.6]
Because the polynomial is symmetric, it is enough to show the first inequality. We proceed by induction on $b$. This can be easily verified for small values of $b$. Now, suppose the statement is true up to $b-1$. First take the case $b \neq \pm 1 \pmod{a}$. Then from 
\begin{equation}
\inv(a,b)=\frac{(a-1)(b-1)(a+b-1)}{4a}-\frac{b}{a}\inv(b,a)
\end{equation}
we get
\begin{align}
\inv(a,b) &\ge \frac{(a-1)(b-1)(a+b-1)}{4a}-\frac{b}{a}\left(\frac{3a^2-12a+9}{8}\right) \nonumber\\
&=-\frac{1}{8}(b+2)a+\frac{1}{4}(b^2+3b+2)-\frac{1}{8}(2b^2+5b+2)\frac{1}{a}=:F(a).
\end{align}
We may compute the minimum value $F(a)$ takes on the interval $3 \le a \le b-2$ (note that if $b \neq \pm 1 \pmod{a}$, then $a \neq 2$). Its derivative is
\begin{equation}
\frac{d}{da}F(a)=-\frac{1}{8}(b+2)+\frac{1}{8}(2b^2+5b+2)\frac{1}{a^2}.
\end{equation}
This has a zero at $a=\sqrt{2b+1}$, which is inside the interval, but it is a maximum because the second derivative is negative:
\begin{equation}
\frac{d^2}{da^2}F(a)=-\frac{1}{4}(2b^2+5b+2)\frac{1}{a^3}.
\end{equation}
So, we check the end points
\begin{equation}
F(3)=\frac{b^2+b-2}{6}=\frac{b^2+4b-5}{24}+\frac{b^2-1}{8}
\end{equation}
and
\begin{equation}
F(b-2)=\frac{b^3+2b^2-9b-18}{8b-16}=\frac{b^2-2b+5}{2b-4}+\frac{b^2-1}{8}.
\end{equation}
The fraction $(b^2+4b-5)/24$ is positive for $b>1$, so $F(3)>(b^2-1)/8$. The fraction $(b^2-2b+5)/(2b-4)$ is always positive as well. To see this, note that the largest root of $b^2-2b+5$ is $1+\sqrt{6} \approx 3.449$, so for $b \ge 4$, the numerator and denominator are both positive. This tells us that for $b \ge 4$, $F(b-2) > (b^2-1)/8$.

Now take the case where $b = 1\pmod{a}$. Then $\inv(b,a)=\inv(1,a)=0$, so
\begin{equation}
\inv(a,b)=\frac{(a-1)(b-1)(a+b-1)}{4a}=\frac{1}{4}(b-1)a+\frac{1}{4}(b^2-3b+2)-\frac{1}{4}(b^2-2b+1)\frac{1}{a}=:G(a).
\end{equation}
Note that $G(a)$ is increasing for $b \ge 2$ because its derivative is positive:
\begin{equation}
\frac{d}{da}G(a)=\frac{1}{4}(b-1)+\frac{1}{4}(b-1)^2\frac{1}{a^2}.
\end{equation}
So,
\begin{equation}
\inv(a,b) \ge G(2)=\frac{b^2-1}{8}.
\end{equation}
For the last case, $b=-1 \pmod{a}$, we have $\inv(b,a)=\inv(-1,a)=(a-1)(a-2)/2$, so
\begin{align}
\inv(a,b)&=\frac{(a-1)(b-1)(a+b-1)}{4a}-\frac{b}{a}\frac{(a-1)(a-2)}{2} \nonumber\\
&=-\frac{1}{4}(b+1)a+\frac{1}{4}(b^2+3b+2)-\frac{1}{4}(b^2+2b+1)\frac{1}{a}=:H(a).
\end{align}
As in the case of $F(a)$, we can compute the minimum value it takes in the interval $2 \le a \le b-2$. Compute the derivative
\begin{equation}
\frac{d}{da}H(a)=-\frac{1}{4}(b+1)+\frac{1}{4}(b+1)^2\frac{1}{a^2}.
\end{equation}
This has a zero at $a=\sqrt{b+1}$, which is inside the interval for $b \ge 2$, but it is a maximum because the second derivative is negative:
\begin{equation}
\frac{d^2}{da^2}H(a)=-\frac{1}{2}(b+1)^2\frac{1}{a^3}.
\end{equation}
So, we check the end points
\begin{equation}
H(2)=\frac{b^2-1}{8}.
\end{equation}
The other end point actually does get smaller than $(b^2-1)/8$, but if $b=-1 \pmod{a}$, then $a \le (b+1)/2$. Thus, we only need to look at the end point
\begin{equation}
H\left(\frac{b+1}{2}\right)=\frac{b^2-1}{8}.
\end{equation}
Therefore, in all cases, $\inv(a,b)$ gets no lower than $(b^2-1)/8$.
\end{proof}

\begin{proof}[Proof of Proposition 2.7]
From repeated use of (\ref{recip2}), one can show inductively that if we have the following sequence
\begin{equation}
r_{-1}=b,r_0=a,r_1,r_2,r_2,\dots,r_n,r_{n+1}=1
\end{equation}
where $r_{j+2}$ is $r_j$ reduced modulo $r_{j+1}$ for all $-1 \le j \le n-1$, then 
\begin{equation}
\inv(a,b)=\frac{a-1}{4a}b^2+\left(\frac{(a-1)(a-2)}{4a}+\frac{1}{4}\sum_{j=1}^n \frac{(-1)^j}{r_j r_{j-1}}(r_j-1)(r_{j-1}-1)(r_j+r_{j-1}-1)\right)b-\frac{(a-1)^2}{4a}.
\end{equation}
From this, if $\inv(a_1,b)=\inv(a_2,b)$,
\begin{align}
0&=\left(\frac{a_1-1}{4a_1}-\frac{a_2-1}{4a_2}\right)b^2+\left(\frac{(a_1-1)(a_1-2)}{4a_1}-\frac{(a_2-1)(a_2-2)}{4a_2}-\frac{(a_1-1)(r-1)(a_1+r-1)}{4a_1r} \right. \nonumber\\
&\hspace{2cm}\left. +\frac{(a_2-1)(r-1)(a_2+r-1)}{4a_2r}\right)b-\left(\frac{(a_1-1)^2}{4a_1}-\frac{(a_2-1)^2}{4a_2}\right) \nonumber\\
&=-\frac{1}{4a_1}-\frac{a_1}{4}+\frac{1}{4a_2}+\frac{a_2}{4}-\frac{b^2}{4a_1}+\frac{b^2}{4a_2}+\frac{b}{4a_1r}+\frac{a_1 b}{4r}-\frac{b}{4a_2r}-\frac{a_2b}{4r}+\frac{br}{4a_1}-\frac{br}{4a_2} \nonumber\\
&=\frac{1}{a_1}\left(-\frac{1}{4}+\frac{b}{4r}\right)(a_1-a_2)\left(a_1-\frac{1}{a_2}+\frac{br}{a_2}\right).
\end{align}
This means that $a_1=a_2$ or $a_1=(1-br)/a_2$. The second one is impossible if $1 \le a_1,a_2 \le b-1$, so $a_1=a_2$.
\end{proof}

\section{Directions for Further Research}

As mentioned above, understanding all of the factors of $f_b$ will tell us the number of equal Dedekind sums. In this effort, it would be of great interest to discover a more general version of Conjecture 2.1 which covers the cyclotomic roots not classified in our paper. It would also be necessary to understand the large non-cyclotomic irreducible factor of the inversion polynomial. From analyzing the roots of $f_b$ for many $b$, some patterns are apparent. For example, all roots which are not on the unit circle belong to the large non-cyclotomic irreducible factor; however, this factor does contain roots on the unit circle as well. Fig. 1 below are plots of the roots of $f_b$ for $b=11$, 14, and 21.

\begin{figure}[h!]
\centerline{\psfig{file=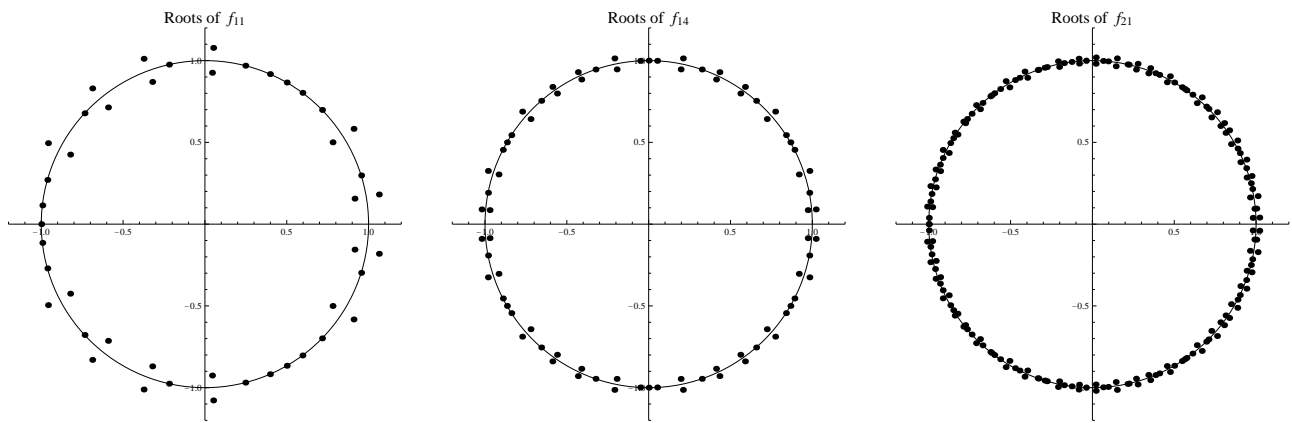,width=7in}} 
\vspace*{8pt}
\caption{The roots of $f_{11}$, $f_{14}$, and $f_{21}$ respectively, plotted on the complex plane.\label{fig1}}
\end{figure}

One can show, using Proposition 2.6 and Rouche's theorem that if $x$ is a root of $f_b$, then 
\begin{equation}
e^{-\frac{8\log \varphi(b)}{b^2-1}} < |x| < e^{\frac{8\log \varphi(b)}{b^2-1}}.
\end{equation}
By analyzing the roots of the $f_b$, it may be possible to arrive at bounds for the coefficients.

\section{Acknowledgements}

We would like the thank Professor Sinai Robins for his invaluable advice and guidance through this project. He introduced us to the Dedekind sums problem through his previous work, [3]. We also appreciate advice and gracious help of Le Quang Nhat and Emmanuel Tsukerman. This project was funded by ICERM, the Brown UTRA program and the Brown University Department of Mathematics. We are very grateful for their trust and support.

\end{document}